\theoremstyle{plain}
\newtheorem{theorem}{Theorem}
\newtheorem{lemma}[theorem]{Lemma}
\newtheorem{proposition}[theorem]{Proposition}
\newtheorem{corollary}[theorem]{Corollary}
\theoremstyle{definition}
\newtheorem{definition}[theorem]{Definition}
\theoremstyle{remark}
\newcommand{\ran}{\textrm{ran}}
\newcommand{\Nst}{\mathbb{N}^\ast}
\newcommand{\CC}{\mathcal{C}}
\newcommand{\DD}{\mathcal{D}}
\newcommand{\EE}{\mathcal{E}}
\newcommand{\FF}{\mathcal{F}}
\newcommand{\LL}{\mathcal{L}}
\newcommand{\UU}{\mathcal{U}}
\newcommand{\VV}{\mathcal{V}}
\newcommand{\NNN}{\mathbb{N}}
\newcommand{\SSS}{\mathbb{S}}
\newcommand{\TTT}{\mathbb{T}}
\newcommand{\cccc}{\mathfrak{c}}
\begin{document}

\title[A non-CLP-compact product space]{A non-CLP-compact product space whose finite subproducts are CLP-compact}
\author{Andrea Medini}

\date{September 6, 2010}

\address{University of Wisconsin-Madison Mathematics Dept.}
\email{medini@math.wisc.edu}

\begin{abstract}
We construct a family of Hausdorff spaces such that every finite product of spaces in the family (possibly with repetitions) is CLP-compact, while the product of all spaces in the family is non-CLP-compact. Our example will yield a single Hausdorff space $X$ such that every finite power of $X$ is CLP-compact, while no infinite power of $X$ is CLP-compact. This answers a question of Stepr\={a}ns and \v{S}ostak.
\end{abstract}

\maketitle

For all undefined topological notions, see \cite{engelking}. Recall that a subset of a topological space is \emph{clopen} if it is closed and open.

\begin{definition}
A space $X$ is \emph{CLP-compact} if every cover of $X$ consisting of clopen sets has a finite subcover.
\end{definition}

Using methods of Stepr\={a}ns and \v{S}ostak (see Section 4 in \cite{steprans}), we will construct a family $\{X_i:i\in\omega\}$ of Hausdorff spaces such that $\prod_{i\in n}X_{p(i)}$ is CLP-compact whenever $n\in\omega$ and $p:n\longrightarrow\omega$, while the full product $P=\prod_{i\in\omega}X_i$ is not. In order to make sure that $P$ is non-CLP-compact, we will use an idea from \cite{comfort}; see also Example 8.28 in \cite{walker}. Our example answers the first half of Question 6.3 in \cite{steprans}; see also Question 7.1 in \cite{dikranjan}. For a positive result on (finite) products of CLP-compact spaces, see \cite{steprans2}.

In the last section, using ideas from \cite{frolik} and \cite{comfort2}, we will convert the example described above into a single Hausdorff space $X$ such that $X^n$ is CLP-compact for every $n\in\omega$, while $X^\kappa$ is non-CLP-compact for every infinite cardinal $\kappa$. This answers the second half of Question 6.3 in \cite{steprans} (see Corollary $\ref{secondhalf}$).

Let $\FF$ be a family of non-empty closed subsets of $\Nst$. Define $X(\FF)$ as the topological space with underlying set $\NNN\cup\FF$ and with the coarsest topology satisfying the following requirements.
\begin{itemize}
\item The singleton $\{n\}$ is open for every $n\in\NNN$.
\item For every $K\in\FF$, the set $\{K\}\cup A$ is open whenever $A\subseteq\NNN$ is such that $K\subseteq A^\ast$.
\end{itemize}
Our example will be obtained by setting $X_i=X(\FF_i)$ for every $i\in\omega$, where $\{\FF_i:i\in \omega\}$ is the family constructed in Theorem $\ref{construction}$. 

\begin{proposition}[Stepr\={a}ns and \v{S}ostak] Assume that $\FF$ consists of pairwise disjoint sets. Then $X(\FF)$ is a Hausdorff space.
\end{proposition}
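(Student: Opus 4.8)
The plan is to verify the Hausdorff condition directly, separating any two distinct points of $X(\FF)$ by disjoint open sets, splitting into three cases according to the types of the two points (both in $\NNN$, one in $\NNN$ and one in $\FF$, or both in $\FF$). Throughout I would use the standard facts that for $A\subseteq\NNN$ the trace $A^\ast=\overline{A}\cap\Nst$ (closure in $\bN$) is clopen in $\Nst$, that $(\NNN\setminus A)^\ast=\Nst\setminus A^\ast$, and that $\Nst$ is a compact zero-dimensional Hausdorff space whose clopen algebra is exactly $\{A^\ast:A\subseteq\NNN\}$.

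The two easy cases involve at least one point of $\NNN$. If the points are $m,n\in\NNN$ with $m\neq n$, then $\{m\}$ and $\{n\}$ are disjoint open sets by the first requirement. If the points are $n\in\NNN$ and $K\in\FF$, then I would take $\{n\}$ and $\{K\}\cup(\NNN\setminus\{n\})$; the latter is open because $K\subseteq\Nst=(\NNN\setminus\{n\})^\ast$, and the two sets are disjoint since $n\notin\NNN\setminus\{n\}$ and $n\neq K$.

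The substantive case is separating two distinct points $K,L\in\FF$, and this is where the disjointness hypothesis enters. Since $K\cap L=\emptyset$ and both are closed (hence compact) in the compact zero-dimensional space $\Nst$, they can be separated by a clopen set: by the usual finite-cover argument there is a clopen $U\subseteq\Nst$ with $K\subseteq U$ and $U\cap L=\emptyset$. Writing $U=A^\ast$ for a suitable $A\subseteq\NNN$ and setting $B=\NNN\setminus A$, I obtain $L\subseteq\Nst\setminus U=B^\ast$ together with $A\cap B=\emptyset$. Then $\{K\}\cup A$ and $\{L\}\cup B$ are open neighborhoods of $K$ and $L$ respectively, and they are disjoint: $A\cap B=\emptyset$, while $K\notin\{L\}\cup B$ and $L\notin\{K\}\cup A$ hold automatically because $K\neq L$ and $A,B\subseteq\NNN$ are disjoint from $\FF$.

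The main obstacle is precisely this last case, and it is exactly where disjointness of $\FF$ is needed: if $K$ and $L$ shared a point $p$, then for any basic neighborhoods $\{K\}\cup A$ and $\{L\}\cup B$ we would have $p\in A^\ast\cap B^\ast=(A\cap B)^\ast$, forcing $A\cap B$ to be infinite and the neighborhoods to meet. The one supporting fact worth isolating is that disjoint closed subsets of $\Nst$ are separated by a clopen set and that such a set has the form $A^\ast$; this follows from compactness of $K$ with zero-dimensionality of $\Nst$, together with the identification of the clopen algebra of $\Nst$ with $P(\NNN)/\mathrm{fin}$.
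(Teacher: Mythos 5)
Your proof is correct and follows essentially the same approach as the paper, whose entire proof is the hint ``use the fact that disjoint closed sets in $\Nst$ can be separated by a clopen set'' --- precisely the key step in your case of two points of $\FF$. You have simply filled in the routine details (the two easy cases and the translation of the clopen separator into basic neighborhoods of the form $\{K\}\cup A$), all of which check out.
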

\begin{proof}
Use the fact that disjoint closed sets in $\Nst$ can be separated by a clopen set.
\end{proof}

\section{Finite products}

We will use the following notion to ensure the CLP-compactness of finite products.
\begin{definition} 
Fix $n\in\omega$. A product $X=\prod_{i\in n}X_i$ is \emph{CLP-rectangular} if for every clopen set $U\subseteq X$ and every $x\in U$ there exists a clopen rectangle $R\subseteq X$ such that $x\in R\subseteq U$.
\end{definition}
For the proof of the following proposition, see Proposition 2.4 and Proposition 2.5 in \cite{steprans}; see also Theorem 3.4 in \cite{dikranjan}.

\begin{proposition}[Stepr\={a}ns and \v{S}ostak]\label{clprectangular} Fix $n\in\omega$. Assume that $X_i$ is CLP-compact for every $i\in n$. Then $\prod_{i\in n}X_i$ is CLP-compact if and only if it is CLP-rectangular.
\end{proposition}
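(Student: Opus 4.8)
The plan is to prove the two implications separately; the work is concentrated in a single clopen tube lemma for the nontrivial ($\Leftarrow$) direction, together with a quasicomponent argument for ($\Rightarrow$). Throughout I would use two elementary facts about CLP-compactness: a clopen subspace of a CLP-compact space is CLP-compact, and (dually to the definition) a space is CLP-compact if and only if every family of clopen sets with the finite intersection property has nonempty intersection. I would also repeatedly use that a finite intersection of clopen rectangles is again a clopen rectangle, since $\prod_{i\in n}R_i\cap\prod_{i\in n}R_i'=\prod_{i\in n}(R_i\cap R_i')$.

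For ($\Leftarrow$), I would first reduce an arbitrary clopen cover to a cover by clopen rectangles: given a clopen cover $\UU$, CLP-rectangularity lets me choose for each $x$ a clopen rectangle $R_x$ with $x\in R_x\subseteq U$ for some $U\in\UU$, so a finite subcover of $\{R_x:x\in X\}$ yields a finite subcover of $\UU$. Thus it suffices to show, by induction on $n$, that every cover of $\prod_{i\in n}X_i$ by clopen rectangles has a finite subcover. The base case is exactly the CLP-compactness of $X_0$. For the inductive step I would write the product as $Y\times X_{n-1}$ with $Y=\prod_{i\in n-1}X_i$ and run a tube argument: fixing $y\in Y$, the cover rectangles meeting the slice $\{y\}\times X_{n-1}$ supply clopen subsets of $X_{n-1}$ covering $X_{n-1}$, so CLP-compactness of $X_{n-1}$ extracts finitely many rectangles $A_1\times B_1,\dots,A_m\times B_m$ with $y\in A_j$ whose union contains the tube $A_y\times X_{n-1}$, where $A_y=\bigcap_{j}A_j$ is a clopen rectangle in $Y$ containing $y$. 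Applying the inductive hypothesis to the clopen-rectangle cover $\{A_y:y\in Y\}$ of $Y$ and collecting the associated finite families finishes the step; the only thing to watch is that the rectangular structure survives the construction, which is guaranteed by the intersection identity above.

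For ($\Rightarrow$) I would argue by contradiction. Fix a clopen $U$ and $x\in U$ and suppose no clopen rectangle $R$ satisfies $x\in R\subseteq U$; equivalently, every clopen rectangle containing $x$ meets the clopen set $X\setminus U$. Since $X\setminus U$ is a clopen subspace of the CLP-compact space $X$, it is CLP-compact, and the sets $R\cap(X\setminus U)$, as $R$ ranges over clopen rectangles containing $x$, form a family of nonempty clopen subsets of $X\setminus U$ with the finite intersection property. Hence there is a point $p\in X\setminus U$ lying in every clopen rectangle containing $x$; taking $R$ to be the cylinder determined by a single coordinate shows that $p_i$ belongs to every clopen neighborhood of $x_i$, for each $i\in n$.

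To reach a contradiction I would interpolate between $x$ and $p$ one coordinate at a time, setting $x^{(k)}$ equal to $p$ on the first $k$ coordinates and to $x$ on the rest, so that $x^{(0)}=x\in U$ and $x^{(n)}=p\notin U$. Choosing $k$ with $x^{(k)}\in U$ and $x^{(k+1)}\notin U$, these two points differ only in coordinate $k$, where they read $x_k$ and $p_k$. Freezing the common coordinates gives a continuous map $\sigma:X_k\to X$ for which $\sigma^{-1}(U)$ is a clopen neighborhood of $x_k$; since $p_k$ lies in every clopen neighborhood of $x_k$, we get $\sigma(p_k)=x^{(k+1)}\in U$, contradicting $x^{(k+1)}\notin U$. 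I expect this quasicomponent step to be the conceptual crux: one cannot in general shrink the open rectangle provided by openness of $U$ down to a clopen one coordinatewise, since the factors need not be zero-dimensional, so the argument must be global, and it is the finite intersection property together with the one-coordinate-at-a-time interpolation that circumvents this.
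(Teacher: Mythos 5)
Your proof is correct in both directions. Bear in mind that the paper itself contains no proof of this proposition: it explicitly refers the reader to Propositions 2.4 and 2.5 of \cite{steprans} and to Theorem 3.4 of \cite{dikranjan}, so the comparison here is with those cited arguments rather than with anything internal to the paper. Your sufficiency direction --- refining an arbitrary clopen cover to a clopen-rectangle cover via CLP-rectangularity, then proving by induction, with a tube-lemma argument, that every clopen-rectangle cover of a finite product of CLP-compact spaces has a finite subcover --- is the standard argument, and the elementary facts you lean on (clopen subspaces of CLP-compact spaces are CLP-compact, the finite-intersection-property reformulation of CLP-compactness, closure of clopen rectangles under finite intersections) are all true and correctly applied. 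Your necessity direction is the more delicate half and is also sound: if no clopen rectangle fits between $x$ and $U$, then the traces $R\cap(X\setminus U)$, as $R$ ranges over clopen rectangles containing $x$, have the finite intersection property inside the clopen, hence CLP-compact, subspace $X\setminus U$, producing a point $p\notin U$ with $p_i$ in every clopen neighborhood of $x_i$ for each $i\in n$; your coordinate-by-coordinate interpolation through the continuous sections $\sigma$ then shows that such a $p$ cannot be separated from $x$ by any clopen set of the product --- this is precisely the classical fact that, in a finite product, the product of the quasicomponents of the $x_i$ is contained in the quasicomponent of $x$ --- contradicting that $U$ is clopen with $x\in U$ and $p\notin U$. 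Two incidental observations: your necessity argument never uses CLP-compactness of the factors, only of the product (harmless, since each nonempty factor is a continuous image of the product, and continuous surjections preserve CLP-compactness); and the finiteness of $n$ is essential to the interpolation step, which is as it should be, since the paper's main construction shows that the corresponding statement has no analogue for infinite products.
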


From now on, we will always assume that $\FF_i$ is a collection of non-empty pairwise disjoint closed subsets of $\Nst$ for every $i\in\omega$. Given $p\in {}^{<\omega}\omega$, we will denote by $n(p)\in\omega$ the domain of $p$. We will use the notation
$$
X_p=\prod_{i\in n(p)}X(\FF_{p(i)})
$$
for finite products, where repetitions of factors are allowed. Also, if $i\in n(p)$, we will denote by $X_{p-i}$ the subproduct $\prod_{j\in n(p)\setminus\{i\}}X(\FF_{p(j)})$.

The following definitions isolate the multidimensional versions of `finiteness' and `cofiniteness' that we need. For every $p\in  {}^{<\omega}\omega$ and $N\in\omega$, we will denote the union of the `initial stripes of height $N$' as
$$
S^N_p=\bigcup_{i\in n(p)}\{x\in X_p: x_i\in N\}\subseteq X_p.
$$
Also define
$$
T^N_p=X_p\setminus S^N_p=\prod_{i\in n(p)}(X(\FF_{p(i)})\setminus N).
$$
\begin{proposition}\label{fincofimplies}
Assume that for every $p\in  {}^{<\omega}\omega$ and every clopen set $U\subseteq X_p$, either $U\subseteq S^N_p$ or $T^N_p\subseteq U$ for some $N\in\omega$. Then $X_p$ is CLP-compact for every $p\in  {}^{<\omega}\omega$.
\end{proposition}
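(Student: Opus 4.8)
The plan is to prove that $X_p$ is CLP-compact by induction on the number of factors $n=n(p)$, the guiding idea being that the assumed dichotomy lets us strip off a single ``tail-absorbing'' clopen set and thereby reduce the cover to finitely many lower-dimensional slices. The base case $n(p)=0$ is trivial, since $X_p$ is then a one-point space. For the inductive step I fix $p$ with $n(p)=n\geq 1$ and assume that $X_q$ is CLP-compact for every $q\in{}^{<\omega}\omega$ with $n(q)<n$; this is legitimate because the premise of the proposition is assumed for \emph{all} $p\in{}^{<\omega}\omega$, so the inductive hypothesis genuinely delivers CLP-compactness of all shorter products. Let $\UU$ be a clopen cover of $X_p$; I must extract a finite subcover.

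The crucial step is to produce one member of $\UU$ that swallows an entire tail $T^{N_0}_p$. Since each $\FF_{p(i)}$ is non-empty, I can choose a point $x^0\in\prod_{i\in n}\FF_{p(i)}\subseteq X_p$ all of whose coordinates lie in the $\FF$-part; as $\FF_{p(i)}\subseteq\Nst$ is disjoint from $\NNN$, no coordinate of $x^0$ is a natural number, so $x^0\notin S^N_p$ for every $N$. Pick $U_0\in\UU$ with $x^0\in U_0$. The alternative $U_0\subseteq S^N_p$ is now impossible for any $N$, so the hypothesis forces $T^{N_0}_p\subseteq U_0$ for some $N_0\in\omega$. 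Consequently $X_p\setminus U_0\subseteq S^{N_0}_p=\bigcup_{i\in n}\bigcup_{k<N_0}\{x\in X_p:x_i=k\}$, which is a union of only finitely many slices.

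It remains to cover these finitely many slices. For fixed $i\in n$ and $k<N_0$, the slice $\{x\in X_p:x_i=k\}$, taken with the subspace topology, is homeomorphic to $X_{p-i}$ by the standard product-slice identification; since $X_{p-i}$ is one of the products $X_q$ with $n(q)=n-1$, it is CLP-compact by the inductive hypothesis. The family $\{U\cap\{x:x_i=k\}:U\in\UU\}$ covers this slice by sets that are relatively clopen (each $U$ is clopen in $X_p$), so CLP-compactness yields a finite subfamily $\VV_{i,k}\subseteq\UU$ whose members cover the slice. Then $\{U_0\}\cup\bigcup_{i\in n,\,k<N_0}\VV_{i,k}$ is a finite subfamily of $\UU$ covering $U_0\cup S^{N_0}_p=X_p$, as required.

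The only genuinely non-routine point is the choice of the fully infinite witness $x^0$: feeding such a point into the dichotomy is exactly what rules out the ``bounded'' alternative and hands us the tail-absorbing clopen set $U_0$. Everything afterwards --- recognizing the slices as lower-dimensional products of the same form and invoking the inductive hypothesis --- is routine bookkeeping.
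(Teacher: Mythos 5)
Your proof is correct, but it follows a genuinely different route from the paper's. The paper also inducts on $n(p)$, but its inductive step establishes that $X_p$ is CLP-\emph{rectangular} and then appeals to Proposition~\ref{clprectangular}, the cited Stepr\={a}ns--\v{S}ostak equivalence between CLP-compactness and CLP-rectangularity for finite products of CLP-compact factors: for a point of a clopen set $U$ having some coordinate in $N$, it finds a clopen rectangle inside the corresponding cross-section (invoking the inductive hypothesis together with Proposition~\ref{clprectangular} once more, on $X_{p-i}$), while for a point with no coordinate in $N$ the rectangle is $T^N_p$ itself. You instead extract a finite subcover directly: your fully infinite witness $x^0$ forces one member $U_0$ of the cover to absorb a tail $T^{N_0}_p$, and what is left over, $S^{N_0}_p$, is a finite union of slices, each homeomorphic to a lower-dimensional product and hence CLP-compact by the inductive hypothesis. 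Your argument is more elementary and self-contained: it makes no use of CLP-rectangularity or of Proposition~\ref{clprectangular} at all, whereas the paper's route stays inside the Stepr\={a}ns--\v{S}ostak framework and, in passing, yields the extra information that each $X_p$ is CLP-rectangular.

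One small point you should make explicit: choosing $x^0\in\prod_{i\in n(p)}\FF_{p(i)}$ requires each family $\FF_{p(i)}$ to be non-empty, while the paper's standing assumption only asserts that the \emph{members} of each $\FF_i$ are non-empty sets. This costs one line: if $\FF_{p(i)}=\varnothing$ for some $i\in n(p)$, then $X(\FF_{p(i)})=\NNN$ is infinite discrete, and the clopen set $\{x\in X_p: x_i\textrm{ is even}\}$ satisfies neither alternative of the dichotomy for any $N\in\omega$, contradicting the hypothesis of the proposition. So the hypothesis itself guarantees $\FF_{p(i)}\neq\varnothing$, and your choice of $x^0$ is legitimate.
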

\begin{proof}
We will use induction on $n(p)$. The case $n(p)=1$ is obvious. So assume that $X_p$ is CLP-compact for every $p\in  {}^{n}\omega$ and let $p\in  {}^{n+1}\omega$. By Proposition $\ref{clprectangular}$, it is enough to prove that $X_p$ is CLP-rectangular. So let $U\subseteq X_p$ be a clopen set and fix $N\in\omega$ such that $U\subseteq S^N_p$ or $T^N_p\subseteq U$. We will show that for every $x\in U$ there exists a clopen rectangle $R\subseteq X_p$ such that $x\in R\subseteq U$.

First we will assume that $x\in U$ has at least one coordinate in $N$, say coordinate $i\in n(p)$. It is easy to check that the cross-section
$$
V=\{y\in X_{p-i}:y\cup\{(i,x_i)\}\in U\}
$$
is clopen in $X_{p-i}$. Observe that $X_{p-i}$ is homeomorphic to $X_{p'}$ for some $p'\in {}^{n}\omega$. Therefore, by the inductive hypothesis and Proposition $\ref{clprectangular}$, there exists a clopen rectangle $Q\subseteq X_{p-i}$ such that $\pi_{p-i}(x)\in Q\subseteq V$, where $\pi_{p-i}:X_p\longrightarrow X_{p-i}$ is the natural projection. It is clear that the desired clopen rectangle is $R=\{y\in X_p :\pi_{p-i}(y)\in Q\textrm{ and }y_i=x_i\}$.

On the other hand, if $x\in U$ has no coordinate in $N$ then the case $U\subseteq S^N_p$ is impossible. Therefore $T^N_p\subseteq U$, so that the desired clopen rectangle is $R=T^N_p$ itself.
\end{proof}

We will also need the following definitions. Let
$$
\SSS^N_p=S^N_p\cap (\NNN^{n(p)})=\bigcup_{i\in n(p)}\{x\in \NNN^{n(p)}: x_i\in N\},
$$
$$
\TTT^N_p=T^N_p\cap (\NNN^{n(p)})=\NNN^{n(p)}\setminus\SSS^N_p=(\NNN\setminus N)^{n(p)}.
$$
The next two lemmas show that, in order to achieve what is required by Proposition $\ref{fincofimplies}$, we can just look at the trace of clopen sets on $\NNN^{n(p)}$.

\begin{lemma}\label{finimpliesfin}
Fix $p\in  {}^{<\omega}\omega$. Assume that $U\subseteq X_p$ is a clopen set such that $U\cap (\NNN^{n(p)})\subseteq \SSS^N_p$. Then $U\subseteq S^N_p$.
\end{lemma}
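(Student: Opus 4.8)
The plan is to argue by contradiction, exploiting the fact that the isolated points $\NNN^{n(p)}$ are dense in $X_p$. First I would record the density observation at the level of a single factor: any basic neighborhood $\{K\}\cup A$ of a point $K\in\FF_{p(i)}$ requires $K\subseteq A^\ast$, and since $K$ is non-empty we have $A^\ast\neq\emptyset$, which forces $A$ to be infinite and in particular non-empty. Hence every neighborhood of every point of $X(\FF_{p(i)})$ meets $\NNN$, so $\NNN$ is dense in each factor and therefore $\NNN^{n(p)}$ is dense in the product $X_p$.

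Now suppose, toward a contradiction, that $U\not\subseteq S^N_p$, so that there is some $x\in U\cap T^N_p$. By the definition of $T^N_p$, every coordinate $x_i$ is either a point of $\FF_{p(i)}$ or a natural number $\geq N$. Since $U$ is open, I would choose a basic open neighborhood $\prod_{i\in n(p)}B_i\subseteq U$ of $x$, where each $B_i$ is a basic open subset of $X(\FF_{p(i)})$.

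The crux is to manufacture a witness $y\in\NNN^{n(p)}$ that lies in $U$ but has every coordinate $\geq N$, contradicting the hypothesis $U\cap(\NNN^{n(p)})\subseteq\SSS^N_p$. I would build $y$ coordinatewise. If $x_i=n\geq N$ is a natural number, then $B_i=\{n\}$, so I set $y_i=n\geq N$. If $x_i=K\in\FF_{p(i)}$, then $B_i=\{K\}\cup A$ with $K\subseteq A^\ast$; as noted $A$ is infinite, so it contains a natural number $y_i\geq N$, and $y_i\in A\subseteq B_i$. In either case $y_i\in B_i\cap\NNN$ with $y_i\geq N$, whence $y\in\prod_{i\in n(p)}B_i\subseteq U$ and $y\in(\NNN\setminus N)^{n(p)}=\TTT^N_p$. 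Thus $y\in U\cap(\NNN^{n(p)})$ yet $y\notin\SSS^N_p$, the desired contradiction, and so $U\subseteq S^N_p$.

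The argument is essentially routine once the density fact is in hand; the only point demanding care is that the witness $y$ must avoid $\{0,\dots,N-1\}$ in \emph{every} coordinate simultaneously, which is exactly what the infinitude of each $A$ (together with the fact that $x\in T^N_p$ already forces the natural-number coordinates of $x$ to be $\geq N$) guarantees. I note in passing that only the openness of $U$ is used here; its closedness plays no role in this particular lemma.
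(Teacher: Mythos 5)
Your proof is correct and follows essentially the same route as the paper: assume some $x\in U\setminus S^N_p$, use openness of $U$ to take a basic product neighborhood of $x$ inside $U$, and use the fact that each factor-neighborhood $\{K\}\cup A$ has $A$ infinite (hence meeting $\NNN\setminus N$) to produce a point of $U\cap\TTT^N_p$, contradicting the hypothesis. The paper phrases this by shrinking each neighborhood so that its trace on $\NNN^{n(p)}$ lies entirely in $\TTT^N_p$ rather than extracting a single witness point, but this is a cosmetic difference; your closing observations (only openness of $U$ is used, and the natural-number coordinates of $x$ are automatically $\geq N$) are also accurate.
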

\begin{proof}
Assume, in order to get a contradiction, that $x\in U\setminus S^N_p$. For all $i\in n(p)$ such that $x_i\in\NNN$, let $N_i=\{x_i\}\subseteq\NNN\setminus N$. For all $i\in n(p)$ such that $x_i\in \FF_{p(i)}$, let $N_i=\{x_i\}\cup A_i$ be a neighborhood of $x_i$ in $X(\FF_{p(i)})$ such that $A_i\subseteq\NNN\setminus N$. Since $U$ is open, by shrinking each $N_i$ if necessary, we can make sure that $\prod_{i\in n(p)}N_i\subseteq U$. This is a contradiction, because $\varnothing\neq (\prod_{i\in n(p)}N_i)\cap (\NNN^{n(p)})\subseteq \TTT^N_p$.
\end{proof}

\noindent Similarly, one can prove the following.

\begin{lemma}\label{cofimpliescof}
Fix $p\in  {}^{<\omega}\omega$. Assume that $U\subseteq X_p$ is a clopen set such that $\TTT^N_p\subseteq U\cap (\NNN^{n(p)})$. Then $T^N_p\subseteq U$.
\end{lemma}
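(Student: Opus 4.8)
The plan is to mirror the proof of Lemma~\ref{finimpliesfin}, but with the roles reversed: there we exploited that $U$ is open, whereas here the essential point is that the \emph{complement} $X_p\setminus U$ is open, which is exactly where the hypothesis that $U$ is clopen (not merely open) gets used. So I would argue by contradiction. Suppose $T^N_p\not\subseteq U$ and fix a point $x\in T^N_p\setminus U$. Since $x\in T^N_p$, every coordinate $x_i$ lies in $X(\FF_{p(i)})\setminus N$; that is, each $x_i$ is either a natural number $\geq N$ or a member of $\FF_{p(i)}$.

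Next I would build a basic open box $\prod_{i\in n(p)}N_i$ around $x$ that sits entirely inside the open set $X_p\setminus U$. For each $i\in n(p)$ with $x_i\in\NNN$ (hence $x_i\geq N$), set $N_i=\{x_i\}\subseteq\NNN\setminus N$. For each $i$ with $x_i\in\FF_{p(i)}$, choose a basic neighborhood $N_i=\{x_i\}\cup A_i$ of $x_i$ with $A_i\subseteq\NNN\setminus N$; this is possible because $A^\ast=(A\setminus N)^\ast$ for any $A\subseteq\NNN$, so one may always discard the finitely many elements below $N$ without affecting the requirement $x_i\subseteq A_i^\ast$. Note each such $A_i$ is infinite (indeed nonempty), since $A_i^\ast\supseteq x_i\neq\varnothing$. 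Because $X_p\setminus U$ is open and contains $x$, after shrinking each $N_i$ if necessary I may assume $\prod_{i\in n(p)}N_i\subseteq X_p\setminus U$.

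Finally I would extract the contradiction from the integer trace of this box. The set $(\prod_{i\in n(p)}N_i)\cap(\NNN^{n(p)})$ is nonempty: for each coordinate, $N_i\cap\NNN$ is nonempty (a singleton $\geq N$ in the integer case, a nonempty subset of $\NNN\setminus N$ in the $\FF$ case). Moreover every such trace point has all coordinates in $\NNN\setminus N$, so this nonempty set is contained in $\TTT^N_p=(\NNN\setminus N)^{n(p)}$, and hence in $U$ by the hypothesis $\TTT^N_p\subseteq U\cap(\NNN^{n(p)})$. But the box lies in $X_p\setminus U$, so its trace is disjoint from $U$. A nonempty set cannot be contained in both $U$ and $X_p\setminus U$, giving the desired contradiction and proving $T^N_p\subseteq U$.

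The argument is essentially routine once the duality is set up, so I do not expect a genuine obstacle; the only points requiring a little care are the two bookkeeping facts that each $A_i$ can be taken inside $\NNN\setminus N$ (via $A^\ast=(A\setminus N)^\ast$) and that the resulting box has nonempty integer trace. The conceptual step worth flagging is simply that, unlike in Lemma~\ref{finimpliesfin}, the proof genuinely needs $U$ to be closed so that $X_p\setminus U$ is available as the open set in which to place the box.
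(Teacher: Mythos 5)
Your proof is correct and is exactly the argument the paper intends by ``Similarly, one can prove the following'': it dualizes the proof of Lemma~\ref{finimpliesfin}, placing the basic box $\prod_{i\in n(p)}N_i$ around a hypothetical point of $T^N_p\setminus U$ inside the open set $X_p\setminus U$, and contradicting the hypothesis $\TTT^N_p\subseteq U\cap(\NNN^{n(p)})$ via the nonempty integer trace of the box. Your flagged point --- that here the closedness of $U$ is what gets used, in contrast to the openness used in Lemma~\ref{finimpliesfin} --- is precisely the intended duality.
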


Fix $p\in  {}^{<\omega}\omega$. We will say that a subset $D$ of $\NNN^{n(p)}$ is \emph{diagonal} if $D\nsubseteq \SSS^N_p$ and $\TTT^N_p\nsubseteq D$ for all $N\in\omega$ and the restriction $\pi_i\upharpoonright D$ of the natural projection $\pi_i:\NNN^{n(p)}\longrightarrow \NNN$ is injective for every $i\in n(p)$.

Given $p\in  {}^{<\omega}\omega$, we will say that a pair $(D,E)$ is \emph{$p$-diagonal} if $D$ and $E$ are both diagonal subsets of $\NNN^{n(p)}$. A pair $(D,E)$ is \emph{diagonal} if it is $p$-diagonal for some $p$ as above. If $(D,E)$ is such a pair, consider the following statement.
\begin{enumerate}
\item[$\textrm{\Coffeecup}(D,E)$] There exist $K_0,\ldots,K_{n(p)-1}$, with $K_i\in \FF_{p(i)}$ for every $i\in n(p)$, such that $D\cap (A_{0}\times\cdots \times A_{n(p)-1})$ and $E\cap (A_{0}\times\cdots \times A_{n(p)-1})$ are both non-empty whenever $A_{0},\ldots ,A_{n(p)-1}\subseteq \NNN$ satisfy $K_i\subseteq A_i^\ast$ for every $i\in n(p)$.
\end{enumerate}

\begin{proposition}\label{coffeeimplies} Fix $p\in  {}^{<\omega}\omega$. Assume that the family $\{\FF_i:i\in \omega\}$ is such that condition $\textrm{\Coffeecup}(D,E)$ holds for every $p$-diagonal pair $(D,E)$. If $U\subseteq X_p$ is a clopen set, then there exists $N\in\omega$ such that either $U\cap (\NNN^{n(p)})\subseteq \SSS^N_p$ or $\TTT^N_p\subseteq U\cap (\NNN^{n(p)})$.
\end{proposition}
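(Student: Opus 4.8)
The plan is to argue by contradiction. Suppose the conclusion fails, so that for every $N\in\omega$ we have both $U\cap(\NNN^{n(p)})\not\subseteq\SSS^N_p$ and $\TTT^N_p\not\subseteq U\cap(\NNN^{n(p)})$. Set $D_0=U\cap(\NNN^{n(p)})$ and $E_0=\NNN^{n(p)}\setminus D_0$, and note that $E_0$ is exactly the set of grid points lying in the (clopen) complement $X_p\setminus U$. Using $\TTT^N_p=\NNN^{n(p)}\setminus\SSS^N_p$, the first failure condition says $D_0\not\subseteq\SSS^N_p$ and the second says $\TTT^N_p\cap E_0\neq\varnothing$, i.e.\ $E_0\not\subseteq\SSS^N_p$; so for every $N$ both $D_0$ and $E_0$ meet $\TTT^N_p$, and consequently neither $\TTT^N_p\subseteq D_0$ nor $\TTT^N_p\subseteq E_0$ can hold.

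Next I would thin $D_0$ and $E_0$ down to diagonal sets. Since $D_0\cap\TTT^N_p\neq\varnothing$ for every $N$, build a sequence $d^0,d^1,\ldots\in D_0$ recursively by choosing $d^k\in D_0\cap\TTT^{N_k}_p$, where $N_k$ is taken strictly larger than every coordinate of $d^0,\ldots,d^{k-1}$. Then for each $i\in n(p)$ the values $d^0_i<d^1_i<\cdots$ are strictly increasing, so $\pi_i\upharpoonright D$ is injective on $D=\{d^k:k\in\omega\}$; moreover $D\cap\TTT^N_p\neq\varnothing$ for every $N$ (the $N_k$ are cofinal in $\omega$), while $\TTT^N_p\not\subseteq D$ because $D\subseteq D_0$. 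Hence $D$ is diagonal, and the same construction on $E_0$ produces a diagonal $E\subseteq E_0$. Thus $(D,E)$ is a $p$-diagonal pair, so by hypothesis $\textrm{\Coffeecup}(D,E)$ holds; fix the witnessing $K_0,\ldots,K_{n(p)-1}$ with $K_i\in\FF_{p(i)}$.

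Finally I would exploit the single product point $\mathbf{K}=(K_0,\ldots,K_{n(p)-1})\in X_p$, using that both $U$ and $X_p\setminus U$ are open. If $\mathbf{K}\in U$, openness yields a basic neighborhood $\prod_{i\in n(p)}(\{K_i\}\cup A_i)\subseteq U$ with each $A_i\subseteq\NNN$ satisfying $K_i\subseteq A_i^\ast$; applying $\textrm{\Coffeecup}(D,E)$ to these $A_i$ produces a point of $E\cap(A_0\times\cdots\times A_{n(p)-1})$, which lies simultaneously in $U$ and in $E\subseteq X_p\setminus U$, a contradiction. If instead $\mathbf{K}\in X_p\setminus U$, the symmetric argument applied to a basic neighborhood inside $X_p\setminus U$ produces a point of $D\cap(A_0\times\cdots\times A_{n(p)-1})$ lying in both $X_p\setminus U$ and $D\subseteq U$, again a contradiction. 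Either way the assumption is untenable, which proves the proposition.

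The step I expect to require the most care is the thinning: I must extract the diagonal sets $D$ and $E$ in such a way that the ``unbounded on the diagonal'' property $D\cap\TTT^N_p\neq\varnothing$ survives while the projections are forced to become injective. The greedy choice of rapidly increasing levels $N_k$ accomplishes both at once, and the inclusions $D\subseteq D_0$, $E\subseteq E_0$ automatically preserve the remaining diagonality requirements. Everything afterward is driven by $\textrm{\Coffeecup}(D,E)$ together with the elementary observation that the point $\mathbf{K}$ must land in one of the two open sets $U$ or $X_p\setminus U$.
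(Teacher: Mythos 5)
Your proof is correct and follows essentially the same route as the paper's: assume both alternatives fail for every $N$, extract diagonal sets $D\subseteq U$ and $E\subseteq X_p\setminus U$, and use the witnesses from $\textrm{\Coffeecup}(D,E)$ to produce a point lying in the closure of both $U$ and its complement, contradicting clopenness. The paper leaves the thinning construction and the final contradiction as routine; your greedy choice of levels $N_k$ and the case analysis on whether $\mathbf{K}\in U$ or $\mathbf{K}\in X_p\setminus U$ are exactly the details it omits.
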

\begin{proof}
Assume, in order to get a contradiction, that $U\cap (\NNN^{n(p)})\nsubseteq \SSS^N_p$ and $\TTT^N_p\nsubseteq U\cap (\NNN^{n(p)})$ for every $N\in\omega$. Then it is possible to construct (in $\omega$ steps) diagonal subsets $D$ and $E$ of $\NNN^{n(p)}$ such that $D\subseteq U$ and $E\subseteq X_p\setminus U$.

Now let $K_{0},\ldots ,K_{n(p)-1}$ be as given by condition $\textrm{\Coffeecup}(D,E)$. Define $x\in X_p$ by setting $x_i=K_i$ for every $i\in n(p)$. It is easy to see that $x\in\overline{U}\cap (\overline{X_p\setminus U})$, which contradicts the fact that $U$ is clopen.
\end{proof}

\begin{theorem}\label{finitesubproducts}
Assume that the family $\{\FF_i:i\in \omega\}$ is such that condition $\textrm{\Coffeecup}(D,E)$ holds for every diagonal pair $(D,E)$.
Then $X_p$ is CLP-compact for every $p\in  {}^{<\omega}\omega$.
\end{theorem}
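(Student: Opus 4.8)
The plan is simply to chain the four preceding results in sequence; the hypothesis of this theorem is tailored to be exactly the input that Proposition $\ref{coffeeimplies}$ requires, once one specializes to a fixed $p$. So I would begin by fixing an arbitrary $p\in {}^{<\omega}\omega$. By definition a diagonal pair is $p'$-diagonal for some $p'$, so the blanket assumption that $\textrm{\Coffeecup}(D,E)$ holds for \emph{every} diagonal pair yields in particular that it holds for every $p$-diagonal pair $(D,E)$. That is precisely the hypothesis under which Proposition $\ref{coffeeimplies}$ operates.

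Next I would invoke Proposition $\ref{coffeeimplies}$ to obtain, for each clopen $U\subseteq X_p$, an $N\in\omega$ such that either $U\cap(\NNN^{n(p)})\subseteq\SSS^N_p$ or $\TTT^N_p\subseteq U\cap(\NNN^{n(p)})$. These are conditions on the trace of $U$ on the discrete part $\NNN^{n(p)}$, and Lemmas $\ref{finimpliesfin}$ and $\ref{cofimpliescof}$ exist precisely to promote them to conditions on all of $X_p$: in the first case Lemma $\ref{finimpliesfin}$ gives $U\subseteq S^N_p$, and in the second case Lemma $\ref{cofimpliescof}$ gives $T^N_p\subseteq U$. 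Thus for every $p$ and every clopen $U\subseteq X_p$ there is an $N$ with $U\subseteq S^N_p$ or $T^N_p\subseteq U$.

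Finally, this last statement is verbatim the hypothesis of Proposition $\ref{fincofimplies}$, so I would close the argument by quoting that proposition to conclude that $X_p$ is CLP-compact for every $p\in {}^{<\omega}\omega$.

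Since each link in the chain has already been forged, I do not expect any genuine obstacle. The only subtlety worth flagging is the quantifier bookkeeping in the first step: one must notice that the theorem's hypothesis ranges over all diagonal pairs at once, whereas Proposition $\ref{coffeeimplies}$ is stated for a fixed $p$, so the reduction is the trivial observation that the former implies the latter for each $p$ separately. After that the two lemmas and the two propositions compose mechanically, and the real mathematical content lives entirely in verifying $\textrm{\Coffeecup}(D,E)$, which is deferred to the construction of the family $\{\FF_i:i\in\omega\}$.
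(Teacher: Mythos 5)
Your proposal is correct and follows exactly the same route as the paper's own (much terser) proof: verify the hypothesis of Proposition~\ref{fincofimplies} by combining Proposition~\ref{coffeeimplies} with Lemmas~\ref{finimpliesfin} and~\ref{cofimpliescof}. Your spelled-out quantifier bookkeeping and the explicit case split are just a more detailed rendering of what the paper compresses into two sentences.
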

\begin{proof}
We will show that the hypothesis of Proposition $\ref{fincofimplies}$ holds. This follows from Proposition $\ref{coffeeimplies}$, Lemma $\ref{finimpliesfin}$ and Lemma $\ref{cofimpliescof}$.
\end{proof}

\section{The full product}

In this section we will show how to ensure that $P=\prod_{i\in\omega}X(\FF_i)$ is non-CLP-compact. Consider the following condition. Recall that a family $\LL$ is \emph{linked} if $K\cap L\neq\varnothing$ whenever $K,L\in\LL$.

\begin{enumerate}
\item[$\textrm{\Bicycle}$] For every $I\in [\omega]^\omega$, the family $\LL=\{x_i:i\in I\}$ is not linked for any $x\in\prod_{i\in I}\FF_i$.
\end{enumerate}

\begin{theorem}\label{fullproduct}
Assume that the family $\{\FF_i:i\in\omega\}$ is such that condition $\textrm{\Bicycle}$ holds. Then $P=\prod_{i\in\omega}X(\FF_i)$ can be written as the disjoint union of infinitely many of its non-empty clopen subsets.
\end{theorem}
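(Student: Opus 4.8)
The plan is to produce infinitely many pairwise disjoint non-empty clopen subsets of $P$ whose union is all of $P$; such a family is a clopen cover of $P$ with no finite subcover, so it already witnesses that $P$ is not CLP-compact. The only real tool for manufacturing clopen sets is the one used in the opening proposition: two disjoint closed subsets of $\Nst$ can be separated by a clopen subset of $\Nst$, that is, by a set of the form $A^\ast$ with $A\subseteq\NNN$. I would therefore route each point $x\in P$ to a piece according to the first coordinate (or first pair of coordinates) at which such a separation is witnessed, so that membership in a piece is decided by finitely many coordinates and the pieces are automatically pairwise disjoint by a first-difference convention.

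The heart of the matter, and the reason condition $\textrm{\Bicycle}$ is exactly what is needed, is a boundary-emptiness argument dual to the one in Proposition $\ref{coffeeimplies}$. There, the linkedness supplied by $\textrm{\Coffeecup}$ was used to exhibit a point of $\overline U\cap(\overline{X_p\setminus U})$ and thereby force clopen sets to be simple. Here I want the reverse. I would argue that if one of the constructed pieces failed to be closed, then a point $x$ lying in the closure of both the piece and its complement would, along the coordinates that its construction left \emph{undecided}, determine an infinite set $I\in[\omega]^\omega$ and a selection $x_i\in\FF_i$ ($i\in I$) whose members pairwise intersect, i.e.\ a \emph{linked} selection. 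This is precisely what $\textrm{\Bicycle}$ forbids. The same dichotomy should force the pieces to \emph{cover} $P$: a point assigned to no piece would again be an undecided diagonal point yielding a linked infinite selection.

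Concretely I would run a recursion along the coordinates maintaining a clopen ``target'' set in $\Nst$: at coordinate $i$, every point whose $i$-th coordinate is an isolated integer, or an $\FF_i$-point lying cleanly inside or outside the current target, is placed into the appropriate clopen piece (these are exactly the cases in which a basic neighborhood, hence a witness on finitely many coordinates, is available), while the remaining \emph{straddling} points are passed to the next coordinate after the target is cut down by a clopen set separating the offending $\FF_i$-point. Disjointness is automatic, each piece is open because membership is witnessed on a finite set of coordinates, and infinitely many pieces are non-empty since any single piece may be split further along the distinct isolated integer values occurring in a decided coordinate. The guiding principle of Lemma $\ref{finimpliesfin}$ and Lemma $\ref{cofimpliescof}$—that a clopen set is already pinned down by its trace on the isolated points—is what makes these finitely-supported witnesses available.

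The main obstacle is to run this recursion so that closedness of each piece and exhaustion of $P$ hold \emph{simultaneously}. Closedness can fail only at limit points of the form $x=(x_i)$ with all $x_i\in\FF_i$, since these are exactly the points approached by isolated integers; so the real work is to show that any such $x$ left on a boundary, or never decided, forces the selected $x_i$ along an infinite set of coordinates to be pairwise intersecting. Arranging the successive cuts of the target so that this pairwise linkedness is genuinely forced, while keeping each piece decided by only finitely many coordinates, is the delicate step, and it is here that one needs the full strength of $\textrm{\Bicycle}$: not merely that some canonical selection fails to be linked, but that \emph{every} infinite selection does.
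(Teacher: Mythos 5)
Your plan, as written, has a genuine gap, and it sits exactly where you say the ``delicate step'' is. The proposal never actually defines the pieces: everything rests on a recursion that cuts down a clopen target in $\Nst$ and on the claim that any point left undecided (or left on a boundary) by this recursion determines an infinite selection $x_i\in\FF_i$ that is linked, contradicting $\textrm{\Bicycle}$. That claim is not forced by the mechanism you describe. If $x_i$ merely \emph{straddles} every target from stage $i$ on, this does not make the $x_i$ pairwise intersect: take $x_i\supseteq\{a_i,b_i\}$ where $a_i$ lies in all of the nested targets and $b_i$ lies outside the $i$-th one; every coordinate then straddles forever, yet the $x_i$ can be pairwise disjoint, so $\textrm{\Bicycle}$ yields no contradiction, and such points are never assigned to any piece (equivalently, the union of your open pieces is not all of $P$, and the leftover set has not been shown to be open). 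Nested clopen cuts simply do not convert ``undecided at every stage'' into linkedness; some other device is needed.

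The paper's proof supplies that device and bypasses your recursion entirely. It writes down the pieces explicitly: $U_n=\{x\in P:x_i=n\textrm{ whenever }0\leq i\leq n\}$, which are clopen, non-empty and pairwise disjoint, and then shows that the single remaining set $V=P\setminus\bigcup_{n\in\omega}U_n$ is open. The point of requiring \emph{all} coordinates $i\leq n$ to equal $n$ is that any basic neighborhood which pins two coordinates $i<j$ down incompatibly (to distinct integers, or to disjoint clopen sets around $x_i$ and $x_j$) automatically misses every $U_n$ with $n\geq j$; one then subtracts only the finitely many clopen sets $U_n$ with $n<j$. Condition $\textrm{\Bicycle}$ is used positively, not by contradiction: if $x\in V$ has infinitely many coordinates lying in the families $\FF_i$, it directly produces $i<j$ with $x_i\cap x_j=\varnothing$, and these disjoint closed subsets of $\Nst$ can be separated by disjoint clopen neighborhoods. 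That two-coordinate separation, combined with the tail-killing shape of the $U_n$, is the finite witness you were trying to manufacture by recursion; without something of this kind, openness of the leftover set cannot be established.
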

\begin{proof}
For each $n\in\omega$, define
$$
U_n=\{x\in P:x_i=n\textrm{ whenever }0\leq i\leq n\}.
$$
It is easy to check that each $U_n$ is open (actually, clopen), non-empty, and that $U_i\cap U_j=\varnothing$ whenever $i\neq j$. Therefore we just need to show that $V=P\setminus\bigcup_{n\in\omega}U_n$ is open.

So fix $x\in V$ and consider $I=\{i\in\omega: x_i\notin\NNN\}$. First assume that $I$ is finite. If there exist $i,j\notin I$, say with $i<j$, such that $x_i\neq x_j$ then $\{y\in P:y_i=x_i\textrm{ and }y_j=x_j\}\setminus\bigcup_{n\in j}U_n$ is an open neighborhood of $x$ which is  contained in $V$. So assume that $x_i=x_j$ whenever $i,j\notin I$. Since $x\in V$, we must have $I\neq\varnothing$. So fix $i\in I$ and $j\notin I$, say with $i<j$ (the other case is similar). Let $N_i=\{x_i\}\cup A_i$ be a neighborhood of $x_i$ such that $x_j\notin A_i$. Then $\{y\in P:y_i\in N_i\textrm{ and }y_j=x_j\}\setminus\bigcup_{n\in j}U_n$ is an open neighborhood of $x$ which is  contained in $V$.

Finally, assume that $I$ is infinite. An application of condition $\textrm{\Bicycle}$ yields $i,j\in I$, say with $i<j$, such that $x_i\cap x_j=\varnothing$. But disjoint closed sets in $\Nst$ can be separated by a clopen set, therefore we can find disjoint clopen neighborhoods $N_i$ and $N_j$ of $x_i$ and $x_j$ respectively. Then $\{y\in P:y_i\in N_i\textrm{ and }y_j\in N_j\}\setminus\bigcup_{n\in j}U_n$ is an open neighborhood of $x$ which is  contained in $V$.
\end{proof}

\section{The construction}

The next theorem guarantees the existence of our example: finite products will be CLP-compact by Theorem $\ref{finitesubproducts}$, while the full product will be non-CLP-compact by Theorem $\ref{fullproduct}$.

\begin{theorem}\label{construction}
There exists a family $\{\FF_i:i\in \omega\}$ satisfying the following requirements.
\begin{itemize}
\item Each $\FF_i$ consists of pairwise disjoint subsets of $\Nst$ of finite size.
\item The condition $\textrm{\Coffeecup}(D,E)$ holds for every diagonal pair $(D,E)$.
\item The condition $\textrm{\Bicycle}$ holds.
\end{itemize}
\end{theorem}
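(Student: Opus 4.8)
The plan is to build the family by transfinite recursion of length $\cccc$. First I would fix an enumeration $\{(p_\alpha,D_\alpha,E_\alpha):\alpha<\cccc\}$ of all triples such that $(D_\alpha,E_\alpha)$ is a $p_\alpha$-diagonal pair; since $\NNN^{n(p)}$ is countable there are exactly $\cccc$ such triples, so this lists every diagonal pair together with every $p$ witnessing its diagonality, which is what Proposition $\ref{coffeeimplies}$ requires. At stage $\alpha$ I will add one finite set to each of the finitely many families indexed by $\ran p_\alpha$, so as to make $\textrm{\Coffeecup}(D_\alpha,E_\alpha)$ true, while keeping a running record of the (fewer than $\cccc$) ultrafilters used so far.

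The engine of the construction is the following observation about a single diagonal set $D\subseteq\NNN^{n}$ (note a diagonal set is necessarily infinite, since $D\nsubseteq\SSS^N_p$ for all $N$). Writing $D=\{d^k:k\in\omega\}$ and $f_i(k)=d^k_i$, the diagonality of $D$ says each $f_i:\omega\longrightarrow\NNN$ is injective. Hence for any free ultrafilter $w$ on $\omega$ the pushforwards $v_i=f_{i*}(w)=\{S\subseteq\NNN:f_i^{-1}[S]\in w\}$ are \emph{free} ultrafilters, and they capture $D$ in the sense that $D\cap(A_0\times\cdots\times A_{n-1})\neq\varnothing$ whenever $A_i\in v_i$ for every $i$: each $A_i\in v_i$ gives $f_i^{-1}[A_i]=\{k:d^k_i\in A_i\}\in w$, and the finite intersection of these sets over $i$ is a nonempty member of $w$, yielding a point of $D$ inside the rectangle. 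Crucially, since $f_i$ is injective the map $w\mapsto v_i$ is itself injective on the $2^\cccc$ free ultrafilters of $\omega$.

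At stage $\alpha$, writing $p=p_\alpha$ and $n=n(p)$, let $F_\alpha\subseteq\Nst$ be the set of ultrafilters used at earlier stages, so $|F_\alpha|\le|\alpha|\cdot\aleph_0<\cccc$. Applying the observation to $D_\alpha$, and analogously (from an enumeration of $E_\alpha$) producing ultrafilters $v_i'$, I would choose free ultrafilters $w,w'$ on $\omega$ so that \emph{all} of the resulting $v_i$ and $v_i'$ ($i<n$) lie outside $F_\alpha$; this is possible because each of the finitely many maps $w\mapsto v_i$, $w'\mapsto v_i'$ is injective, so fewer than $\cccc$ choices are forbidden while $2^\cccc$ are available. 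For each $m\in\ran p$ I then add the single finite set $K_m=\{v_i:p(i)=m\}\cup\{v_i':p(i)=m\}$ to $\FF_m$ and declare all these ultrafilters used. Setting $K_i:=K_{p(i)}$ we have $v_i,v_i'\in K_i$ for every $i$, so by the meeting property above $\textrm{\Coffeecup}(D_\alpha,E_\alpha)$ holds: for any $A_i$ with $K_i\subseteq A_i^\ast$ we have $A_i\in v_i\cap v_i'$, whence the rectangle meets both $D_\alpha$ and $E_\alpha$. Merging all coordinates of a common family into one set $K_m$ is exactly what keeps the elements of each $\FF_m$ pairwise disjoint: within a stage each $\FF_m$ receives at most one set, and across stages the freshness of the chosen ultrafilters makes distinct sets use disjoint ultrafilters. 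Each $K_m$ is a nonempty finite set of free ultrafilters, as required.

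It remains to verify $\textrm{\Bicycle}$, and this together with the freshness bookkeeping is where I expect the genuine content to lie. Freshness yields the invariant that every $u\in\Nst$ belongs to the elements of only finitely many families: $u$ is introduced at a single stage and, being avoided thereafter, occurs only in the finitely many sets $K_m$ ($m\in\ran p_\alpha$) created there. Granting this invariant, suppose $I\in[\omega]^\omega$ and $x\in\prod_{i\in I}\FF_i$ with $\{x_i:i\in I\}$ linked. Fix $i_0\in I$; for each $j\in I\setminus\{i_0\}$ linkedness provides a point of $x_{i_0}\cap x_j$, and since $x_{i_0}$ is finite while $I$ is infinite, some single $u\in x_{i_0}$ lies in $x_j$ for infinitely many $j$, so $u$ belongs to the elements of infinitely many families, contradicting the invariant. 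Hence no linked family exists and $\textrm{\Bicycle}$ holds. The one delicate point is the freshness argument itself: one must check throughout that the number of forbidden choices, bounded by $|F_\alpha|\cdot n<\cccc\le 2^\cccc$, always leaves room to pick the new ultrafilters fresh, using the injectivity of the pushforward maps; everything else is routine verification.
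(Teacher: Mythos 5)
Your proposal is correct and is essentially the paper's own construction: a transfinite recursion of length $\cccc$ through an enumeration of all diagonal triples, with the witnesses for $\textrm{\Coffeecup}$ obtained as pushforwards of a freshly chosen free ultrafilter along the injective coordinate projections (your ultrafilters on $\omega$ correspond, via your enumeration of $D_\alpha$, to the paper's points of $D_\eta^\ast$ and the induced maps $\tau_i^\ast$), the coordinates sharing a family index merged into a single finite set per family per stage, and $\textrm{\Bicycle}$ extracted from the freshness of the chosen ultrafilters. The only substantive deviation --- you make the $E$-side ultrafilters avoid only the previously used points $F_\alpha$, whereas the paper additionally makes them avoid the same-stage $D$-side points --- is harmless, since your pigeonhole verification of $\textrm{\Bicycle}$ and the pairwise disjointness of each $\FF_m$ need only the fact that a point introduced at one stage is never reused at any later stage.
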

\begin{proof}
Enumerate as $\{(D_\eta,E_\eta):\eta\in\cccc\}$ all diagonal pairs, where $D_\eta$ and $E_\eta$ are both diagonal subsets of $\NNN^{n(p)}$ for some $p=p(\eta)\in  {}^{<\omega}\omega$ with domain $n(p)=n(\eta)\in\omega$.

We will construct $\{\FF_i:i\in \omega\}$ by transfinite recursion in $\cccc$ steps: in the end we will set $\FF_i=\bigcup_{\xi\in\cccc} \FF_i^\xi$ for every $i\in\omega$. Start with $\FF_i^0=\varnothing$ for each~$i$. By induction, we will make sure that the following requirements are satisfied.
\begin{enumerate}
\item $\FF_i^\eta\subseteq\FF_i^\mu$ whenever $\eta\leq\mu\in\cccc$.
\item\label{small} $|\bigcup_{i\in\omega}\bigcup\FF^\eta_i|<2^{\cccc}$ for every $\eta\in\cccc$.
\item\label{killbound} The condition $\textrm{\Coffeecup}(D_\eta,E_\eta)$, where $(D_\eta,E_\eta)$ is a $p$-diagonal pair, is satisfied at stage $\xi=\eta+1$: that is, the witness $K_i$ is already in $\FF_{p(i)}^{\eta+1}$ for each $i\in n(p)$.
\end{enumerate}

At a limit stage $\xi$, just let $\FF_i^\xi=\bigcup_{\eta\in\xi}\FF_i^\eta$ for every $i\in\omega$.

At a successor stage $\xi=\eta+1$, assume that $\FF_i^\eta$ is given for each $i$. Let $p=p(\eta)$. First, define $W=\bigcup_{i\in \omega}\bigcup\FF^\eta_i$ and observe that $|W|<2^\cccc$ by $(\ref{small})$. Set $\tau_i=\pi_i\upharpoonright D_\eta$ for every $i\in n(p)$. Since each $\tau_i$ is injective, it makes sense to consider the induced function $\tau_i^\ast : D_\eta^\ast\longrightarrow \NNN^\ast$. Recall that the explicit definition is given by
$$
\tau_i^\ast(\UU)=\{S\subseteq\NNN: \tau_i^{-1}[S]\in\UU\}.
$$
It is easy to check that each $\tau_i^\ast$ is injective. Therefore, since $|D_\eta^\ast|=2^\cccc$, it is possible to choose
$$
\UU^\eta\in D_\eta^\ast\setminus ((\tau_{0}^\ast)^{-1}[W]\cup\cdots\cup(\tau_{n(p)-1}^\ast)^{-1}[W]).
$$
Let $\UU^\eta_i=\tau_i^\ast(\UU^\eta)$ for every $i\in n(p)$.

\noindent Now, define $Z=W\cup\{\UU^\eta_i:i\in n(p)\}$. Set $\sigma_i=\pi_i\upharpoonright E_\eta$ for every $i\in n(p)$. As above, it is possible to choose
$$
\VV^\eta\in E_\eta^\ast\setminus ((\sigma_{0}^\ast)^{-1}[Z]\cup\cdots\cup (\sigma_{n(p)-1}^\ast)^{-1}[Z]).
$$
Let $\VV^\eta_i=\sigma_i^\ast(\VV^\eta)$ for every $i\in n(p)$.

\noindent We conclude the successor stage by setting
$$
\FF_k^{\eta+1}=\FF_k^\eta\cup\{\{\UU^\eta_i:i\in p^{-1}(k)\}\cup\{\VV^\eta_i:i\in p^{-1}(k)\}\}
$$
for every $k\in\ran (p)$ and $\FF_{k}^{\eta+1}=\FF_{k}^\eta$ for every $k\in\omega\setminus\ran (p)$.

Next, we will verify that condition $\textrm{\Bicycle}$ holds. Assume, in order to get a contradiction, that $I\in [\omega]^\omega$ and $x\in\prod_{i\in I}\FF_i$ are such that $\LL=\{x_i:i\in I\}$ is linked. Observe that the only possible equalities among points of $\Nst$ produced in our construction are those in the form $\UU^\eta_i=\UU^\eta_j$ or $\VV^\eta_i=\VV^\eta_j$ for some $i,j\in n(\eta)$. Therefore each element of $\LL$ must have been added to some $\FF_k$ at the same stage $\xi=\eta+1$. Since $I$ is infinite, we can pick $k\in I\setminus\ran (p(\eta))$. It is clear from the construction that $x_k\in\LL$ cannot have been added to $\FF_k$ at stage $\xi=\eta+1$.

Finally, we will verify that $(\ref{killbound})$ holds. For every $i\in n(p)$, set 
$$
K_i=\{\UU^\eta_j:j\in p^{-1}(p(i))\}\cup\{\VV^\eta_j:j\in p^{-1}(p(i))\}\in\FF_{p(i)}^{\eta+1}.
$$
Suppose $A_{0},\ldots,A_{n(p)-1}\subseteq\NNN$ are such that $K_i\subseteq A_i^\ast$ for every $i\in n(p)$. In particular $A_i\in\UU^\eta_i$ for every $i\in n(p)$. By the definition of the induced functions, we have $\tau_{i}^{-1}[A_i]\in \UU^\eta$ for every $i\in n(p)$. Therefore
$$
D_\eta\cap (A_{0}\times\cdots \times A_{n(p)-1})=D_\eta\cap\tau_{0}^{-1}[A_{0}]\cap \cdots\cap \tau_{n(p)-1}^{-1}[A_{n(p)-1}]
$$
is non-empty. By the same argument, using $\VV^\eta$, one can show that $E_\eta\cap (A_{0}\times\cdots \times A_{n(p)-1})$ is non-empty.
\end{proof}

\section{Arbitrarily large products}

The main idea behind the next theorem is due to Frol\'{\i}k (see the `Proof of $\textrm{B}$ (using $\textrm{B}'$)' in \cite{frolik}). To show that $X^\omega$ can be written as the disjoint union of infinitely many of its non-empty clopen subsets, we will proceed as in the proof of Theorem 9.10 in \cite{comfort2}.

\begin{theorem}\label{singlespace}
There exists a Hausdorff space $X$ such that $X^n$ is CLP-compact for every $n\in\omega$, while $X^\omega$ can be written as the disjoint union of infinitely many of its non-empty clopen subsets.
\end{theorem}
\begin{proof}
Let $\{\FF_i:i\in\omega\}$ be the family given by Theorem $\ref{construction}$ and set $X_i=X(\FF_i)$ for every $i\in\omega$. It follows from Theorem $\ref{finitesubproducts}$ and Theorem $\ref{fullproduct}$ that $\{X_i:i\in\omega\}$ is a collection of Hausdorff spaces such that $X_p=\prod_{i\in n(p)}X_{p(i)}$ is CLP-compact for every $p\in  {}^{<\omega}\omega$, while $\prod_{i\in\omega}X_i$ can be written as the disjoint union of infinitely many of its non-empty clopen subsets.

Define $X$ as the topological space with underlying set the disjoint union $\{0\}\oplus X_0\oplus X_1\oplus\cdots$ and with the coarsest topology satisfying the following requirements.
\begin{itemize}
\item Whenever $U$ is an open subset of $X_i$ for some $i\in\omega$, the set $U$ is also open in $X$.
\item The tail $\{0\}\cup\bigcup_{i\leq j<\omega}X_j$ is open in $X$ for every $i\in\omega$.
\end{itemize}
It is easy to check that $X$ is Hausdorff.

We will prove that $X^n$ is CLP-compact by induction on $n$. The case $n=1$ is obvious. So assume that $X^n$ is CLP-compact and consider a cover $\CC$ of $X^{n+1}$ consisting of clopen sets. Since
$$
S=\bigcup_{i\in n+1}\{x\in X^{n+1}:x_i=0\}
$$
is a finite union of subspaces of $X^{n+1}$ that are homeomorphic to $X^n$, there exist $\DD\in[\CC]^{<\omega}$ such that $S\subseteq \bigcup\DD$. It follows that there exists $N\in\omega$ such that $X^{n+1}\setminus (X_0\oplus\cdots\oplus X_{N-1})^{n+1}\subseteq\bigcup\DD$. But 
$$
T=(X_0\oplus\cdots\oplus X_{N-1})^{n+1}
$$
is homeomorphic to a finite union of spaces of the form $X_p$ for some $p\in {}^{n+1}\omega$, hence it is CLP-compact. Therefore there exists $\EE\in[\CC]^{<\omega}$ such that $T\subseteq\bigcup\EE$. Hence $\DD\cup\EE$ is the desired finite subcover of $\CC$.

Finally, we will show that $X^\omega$ can be written as the disjoint union of infinitely many of its non-empty clopen subsets by constructing a continuous surjection $f:X^\omega\longrightarrow\prod_{i\in\omega}X_i$. Since every $X_i$ is clopen in $X$, we can get a continuous surjection $f_i:X\longrightarrow X_i$ by letting $f_i$ be the identity on $X_i$ and constant on $X\setminus X_i$. Now simply let $f=\prod_{i\in\omega}f_i$ (that is, for every $x\in X^\omega$, define $y=f(x)$ by setting $y_i=f_i(x_i)$ for every $i\in\omega$).
\end{proof}

\begin{corollary}\label{secondhalf}
For every infinite cardinal $\kappa$, there exists a collection $\{X_\xi:\xi\in\kappa\}$ of Hausdorff spaces such that $\prod_{\xi\in F}X_\xi$ is CLP-compact for every $F\in [\kappa]^{<\omega}$, while $\prod_{\xi\in\kappa}X_\xi$ is non-CLP-compact.
\end{corollary}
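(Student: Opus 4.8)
The plan is to take the single space $X$ produced by Theorem $\ref{singlespace}$ and let every factor be a copy of it: set $X_\xi=X$ for each $\xi\in\kappa$. With this choice the two required properties should follow almost immediately from what has already been established, and the only real work is to check that the full (possibly uncountable) power inherits the failure of CLP-compactness from the countable power.

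For the finite subproducts this is immediate. Whenever $F\in[\kappa]^{<\omega}$, the product $\prod_{\xi\in F}X_\xi$ is simply $X^{|F|}$, which is CLP-compact by Theorem $\ref{singlespace}$.

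For the full product, the idea is to transport the clopen partition of $X^\omega$ up to $X^\kappa$ along a projection. First I would fix any countably infinite subset $C\subseteq\kappa$ and consider the natural projection $\pi:X^\kappa\longrightarrow X^C$; since $X$ is non-empty, $\pi$ is a continuous surjection, and $X^C$ is homeomorphic to $X^\omega$. By Theorem $\ref{singlespace}$ we may write $X^\omega=\bigsqcup_{n\in\omega}V_n$ with each $V_n$ a non-empty clopen subset. Then $\{\pi^{-1}[V_n]:n\in\omega\}$ is a family of pairwise disjoint, non-empty (by surjectivity of $\pi$) and clopen (by continuity of $\pi$) subsets of $X^\kappa$ whose union is all of $X^\kappa$. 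Such an infinite clopen partition into non-empty pieces is itself a clopen cover admitting no finite subcover, so $X^\kappa=\prod_{\xi\in\kappa}X_\xi$ is non-CLP-compact.

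The only step that deserves any comment is the final observation that being expressible as a disjoint union of infinitely many non-empty clopen sets entails non-CLP-compactness, together with the routine facts that continuous preimages of clopen sets are clopen and that preimages of non-empty sets under a surjection are non-empty. I expect no genuine obstacle here: essentially all of the difficulty has already been absorbed into Theorem $\ref{singlespace}$, and this corollary is a direct packaging of that result for arbitrary $\kappa$.
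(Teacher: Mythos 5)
Your proposal is correct and is exactly the paper's argument: the paper's proof is the single line ``let $X_\xi=X$ for every $\xi\in\kappa$, where $X$ is given by Theorem~\ref{singlespace}.'' Your projection argument (pulling back the clopen partition of $X^\omega$ along $\pi:X^\kappa\longrightarrow X^C$) is precisely the routine verification the paper leaves implicit, and it is carried out correctly.
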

\begin{proof}
Let $X_\xi=X$ for every $\xi\in\kappa$, where $X$ is the space given by Theorem $\ref{singlespace}$.
\end{proof}

\noindent\textbf{Acknowledgement.} The author thanks Wistar Comfort for valuable bibliographical informations.

\end{document}